\def \N {{\mathbb N}}
\def \d {\,{\rm d}}
\def\re{{\Re e\,}}
\def \dm {{\hbox {$\frac{1}{2}$}}}
\def\le{\leqslant}
\def\ge{\geqslant}
\theoremstyle{plain}
\newtheorem{theorem}{Theorem}
\newtheorem{lemma}{Lemma}[section]
\theoremstyle{remark}
\theoremstyle{definition}
\numberwithin{equation}{section}
\begin{document}


\vskip 5mm

\title[The number of Hecke eigenvalues of same signs]
{The number of Hecke eigenvalues of same signs} 
\author{Y.-K. Lau \& J. Wu}
\address{Department of Mathematics
\\
The University of Hong Kong
\\
Pokfulam Road
\\
Hong Kong}
\email{yklau@maths.hku.hk}
\address{Institut Elie Cartan Nancy (IECN)
\\
Nancy-Universit\'e CNRS INRIA
\\
Boulevard des Aiguillettes, B.P. 239
\\
54506 Van\-d\oe uvre-l\`es-Nancy
\\
France}
\email{wujie@iecn.u-nancy.fr}
\address{School of Mathematical Sciences
\\
Shandong Normal University
\\
Jinan, Shandong 250100
\\
China}

\date{\today}

\begin{abstract}
We give the best possible lower bounds in order of magnitude 
for the number of positive and negative Hecke eigenvalues.
 This improves upon a recent work of Kohnen, Lau \& Shparlinski. 
Also, we study an analogous problem for short intervals. 
\end{abstract}
\subjclass[2000]{11F30, 11N25}
\keywords{Fourier coefficients of modular forms,
$\mathscr{B}$-free numbers}
\maketitle

\addtocounter{footnote}{1}

\section{Introduction}

\smallskip

Let $k\ge 2$ be an even integer and $N\ge 1$ be squarefree. 
Among all holomorphic cusp forms of weight $k$ for the congruence subgroup $\Gamma_0(N)$, 
there are finitely many of them whose Fourier coefficients in the expansion at the cusp $\infty$, 
\begin{equation*}\label{SFf}
f(z) 
= \sum_{n=1}^\infty \lambda_f(n) n^{(k-1)/2} e^{2\pi inz}
\qquad(\Im m z>0),
\end{equation*}
are the Hecke eigenvalues. Up to scalar multiples, these forms are the only simultaneous eigenfunctions of all Hecke operators. 
We call them the primitive forms, and write ${\rm H}_k^*(N)$ 
for the set of all primitive forms of weight $k$ for $\Gamma_0(N)$.  
One central problem in modular form theory is to study the Hecke eigenvalues $\lambda_f(n)$. 
(We omit the factor $n^{(k-1)/2}$ to avoid its uneven amplifying effect.) 
Classically it is known that the arithmetical function $\lambda_f(n)$ is real multiplicative, 
and verifies Deligne's inequality
\begin{equation}\label{Deligne}
|\lambda_f(n)| \le d(n)
\end{equation}
for all $n\ge 1$, 
where $d(n)$ is the divisor function. 
Furthermore we have
\begin{equation}\label{pN}
\lambda_f(p^\nu)=\lambda_f(p)^\nu
\qquad{\rm and}\qquad
\lambda_f(p)=\varepsilon_f(p)/\sqrt{p}
\end{equation}
for all primes $p\mid N$ and integers $\nu\ge 1$,
where $\varepsilon_f(p)\in \{\pm 1\}$.
(See \cite{De74} and \cite{IK}.)
The distribution of the Hecke eigenvalues $\lambda_f(n)$ is delicate. 
The Lang-Trotter conjecture concerns the frequency of $\lambda_f(p)$ taking a value  
in the admissible range where $p$ runs over primes. 
This conjecture is still open but there are progress made on itself or the pertinent questions, 
for instance, \cite{El}, \cite{Ser}, \cite{MMS}, 
\cite{M}, \cite{BO01}, \cite{CFM}, \cite{KRW07}, etc.
In this regard, various techniques and tools are applied, such as $\ell$-adic representations, 
Chebotarev density theorem, sieve-theoretic arguments, 
Rankin-Selberg $L$-functions and the method of ${\mathscr B}$-free numbers. 
In \cite{KRW07}, Kowalski, Robert \& Wu investigated the nonvanishing problem 
and gave the sharpest upper estimate to-date on the gaps 
between consecutive nonzero Hecke eigenvalues.
Another wide belief is Sato-Tate's conjecture, 
asserting that $\lambda_f(p)$'s are equidistributed on $[-2, 2]$ with respect to the Sato-Tate measure.

In this paper, we are concerned with the Hecke eigenvalues of the same sign. 
Kohnen, Lau \& Shparlinski \cite[Theorem 1]{KLS} proved
\begin{equation}\label{LowerBN+N-KLS}
{\mathscr N}_f^{\pm}(x)
:=\sum_{\substack{n\le x, \, (n, N)=1\\ \lambda_f(n)\gtrless\,0}} 1
\gg_f \frac{x}{(\log x)^{17}}
\end{equation}
for $x\ge x_0(f)$.
\footnote{It is worthy to indicate that they gave explicit values 
for the implied constant in $\gg$ and $x_0(f)$.}
Very recently Wu \cite[Corollary]{Wu08} improved this result 
by reducing the exponent 17 to $1-1/\sqrt{3}$,
as a simple application of his estimates on power sums of Hecke eigenvalues.
The exponent $1-1/\sqrt{3}$ can be improved to $2-16/(3\pi)$ if one assumes Sato-Tate's conjecture.

Our first result is to remove the logarithmic factor by the
${\mathscr B}$-free number method, which is the best possible in order of magnitude.

\begin{theorem}\label{LBNpmx}
Let $f\in {\rm H}_k^*(N)$.
Then there is a constant $x_0$ such that the inequality 
\begin{equation}\label{LowerNpm}
{\mathscr N}_f^{\pm}(x)
\gg_f x
\end{equation}
holds for all $x\ge x_0$.
\end{theorem}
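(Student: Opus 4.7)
My plan is a ``sign-flip'' built around a single auxiliary prime $p_0\nmid N$ chosen so that $0<|\lambda_f(p_0)|<1$. For such a prime, the Hecke recursion $\lambda_f(p_0^2)=\lambda_f(p_0)^2-1$ shows that $\lambda_f(p_0)$ and $\lambda_f(p_0^2)$ are both nonzero and of opposite signs. Consequently, for every $m$ coprime to $p_0 N$ with $\lambda_f(m)\ne 0$, multiplicativity gives
\begin{equation*}
\lambda_f(p_0 m)=\lambda_f(p_0)\lambda_f(m) \quad\text{and}\quad \lambda_f(p_0^2 m)=\lambda_f(p_0^2)\lambda_f(m),
\end{equation*}
both nonzero and of opposite signs. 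So each such $m\le x/p_0^2$ produces one integer $\le x$ in $\mathscr{N}_f^+(x)$ and one in $\mathscr{N}_f^-(x)$; the pairs $(p_0 m, p_0^2 m)$ for distinct $m$'s are separated by $p_0$-adic valuation, so no double-counting occurs.

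Such a prime $p_0$ exists unconditionally. Were $\lambda_f(p^2)\ge 0$ for all $p\nmid N$, then Serre's density-zero estimate for $\{p:\lambda_f(p)=0\}$, together with the prime number theorem for $L(s,\syms f)$ (yielding $\sum_{p\le x}\lambda_f(p^2)=o(\pi(x))$), would force $\sum_{p\le x}|\lambda_f(p^2)|=o(\pi(x))$. Deligne's bound $|\lambda_f(p^2)|\le 3$ then gives $\sum_{p\le x}\lambda_f(p^2)^2=o(\pi(x))$, contradicting the Rankin--Selberg asymptotic $\sum_{p\le x}\lambda_f(p^2)^2\sim \pi(x)$ arising from $L(s,\syms f\times\syms f)$.

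The second and main ingredient is a $\mathscr{B}$-free nonvanishing count in the spirit of Kowalski--Robert--Wu \cite{KRW07}: for each $p\nmid p_0 N$, let $\nu_p$ be the least $\nu\ge 1$ (if any) with $\lambda_f(p^\nu)=0$, and set $\mathscr{B}=\{p^{\nu_p}\}$; then every $\mathscr{B}$-free $m$ coprime to $p_0 N$ satisfies $\lambda_f(p^{\nu_p(m)})\ne 0$ at each prime divisor, hence $\lambda_f(m)\ne 0$. The classical Erd\H{o}s $\mathscr{B}$-free sieve, supplied with the quantitative thinness of the zero set $\{(p,\nu):\lambda_f(p^\nu)=0\}$, yields
\begin{equation*}
\#\{m\le y : (m, p_0 N) = 1,\ \lambda_f(m) \ne 0\}\gg_f y \qquad (y \ge y_1).
\end{equation*}
Inserting $y=x/p_0^2$ and combining with the sign-flip completes the proof that $\mathscr{N}_f^{\pm}(x)\gg_f x$.

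The main technical obstacle is this $\mathscr{B}$-free nonvanishing estimate: constructing $\mathscr{B}$ with $\sum_{b\in\mathscr{B}}1/b<\infty$ and adequate independence for the Erd\H{o}s sieve requires delicate quantitative control on the zeros of $\lambda_f$ along prime powers. By contrast, the sign-flipping device and the production of $p_0$ are comparatively soft ingredients, resting only on general Rankin--Selberg machinery and multiplicativity.
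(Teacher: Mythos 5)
Your overall strategy --- a positive-density set of $\mathscr{B}$-free integers on which $\lambda_f$ does not vanish, followed by a multiplicative sign flip through a fixed auxiliary prime --- is exactly the paper's. But the sign-flip step as you state it is wrong. From $0<|\lambda_f(p_0)|<1$ and $\lambda_f(p_0^2)=\lambda_f(p_0)^2-1$ you may only conclude $\lambda_f(p_0^2)<0$; if $\lambda_f(p_0)\in(-1,0)$ then $\lambda_f(p_0)$ and $\lambda_f(p_0^2)$ are \emph{both negative}, and your pair $(p_0m,\,p_0^2m)$ produces two integers whose eigenvalues have the same sign as each other. Since you have no control over how the $\mathscr{B}$-free $m$ split between $\lambda_f(m)>0$ and $\lambda_f(m)<0$, the construction then fails to bound one of $\mathscr{N}_f^{+}(x)$, $\mathscr{N}_f^{-}(x)$ from below. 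The repair is easy and is essentially what the paper does: take a prime $p'$ with $\lambda_f(p')<0$ (its existence, with the explicit bound $p'\ll (k^2N)^{29/60}$, is quoted from Iwaniec--Kohnen--Sengupta) and pair each $\mathscr{B}$-free $a$ with $p'a$, so that exactly one of the two lands in each of $\mathscr{N}^{+}$ and $\mathscr{N}^{-}$. In your setup the analogous fix is a case split: if $\lambda_f(p_0)>0$ use the pair $(p_0m,p_0^2m)$, and if $\lambda_f(p_0)<0$ use $(m,p_0m)$. Your Rankin--Selberg argument for producing $p_0$ is workable but much heavier than needed for this.

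A further comment on what you call the ``main technical obstacle.'' No quantitative control on the vanishing of $\lambda_f(p^\nu)$ for $\nu\ge 2$ is required. The paper simply places every $p^2$ (for $p\nmid p'N$ with $\lambda_f(p)\neq 0$) into $\mathscr{B}$, so the surviving $\mathscr{B}$-free numbers are squarefree and coprime to $N$, and nonvanishing of $\lambda_f$ on them follows from multiplicativity alone; one checks the $b_i$ are pairwise coprime and that $\sum_i 1/b_i<\infty$, the only analytic input being Serre's bound $|\{p\le x:\lambda_f(p)=0\}|\ll_{f,\delta}x/(\log x)^{1+\delta}$, after which Erd\H{o}s's theorem gives positive density. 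So the $\mathscr{B}$-free count you flag as delicate is in fact the soft part of the argument; the genuine defect in your write-up is the opposite-signs claim above.
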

\noindent
Remarks. 1. It is clear from the proof that our method gives the stronger result
$$
\sum_{\substack{n\le x, \, (n, N)=1\\ n\,{\rm squarefree}, \,\lambda_f(n)\gtrless\,0}} 1
\gg_f x
$$
for every $x\ge x_0(f)$.

2. The method is robust and applies to, for example, modular forms of half-integral weight. We return to this problem in another occasion.
 
\smallskip

By coupling (\ref{LowerBN+N-KLS}) with Alkan \& Zaharescu's result in \cite[Theorem 1]{AZ05}, 
it is shown in \cite[Theorem 2]{KLS} (see also \cite[Theorem 3.4]{K07})
that there are absolute constants $\eta<1$ and $A>0$ such that
for any $f\in {\rm H}_k^*(N)$ the inequality
\begin{equation}\label{LowerBN+N-KLSshort}
{\mathscr N}_f^{\pm}(x+x^\eta)
-{\mathscr N}_f^{\pm}(x)
>0
\end{equation}
holds for $x\ge (kN)^A$, but no explicit value of $\eta$ is evaluated. 
Apparently it is interesting and important to know how small $\eta$ can be, 
in order for a better understanding of the local behaviour. 
A direct consequence of (\ref{LowerBN+N-KLSshort}) is 
that $\lambda_f(n)$ has a sign-change
in a short interval $[x,x+x^\eta]$ for all sufficiently large $x$. 
The sign-change problem was explored in \cite{IKS07}, \cite{KLS}, \cite{Wu08} on different aspects.
Here we prove that there are plenty of eigenvalues of the same signs 
in intervals of length about $x^{1/2}$. 
More precisely, we have the following.

\begin{theorem}\label{LBNpmxShort}
Let $f\in {\rm H}_k^*(N)$. 
There is an absolute constant $C>0$ such that for any $\varepsilon>0$ 
and all sufficiently large $x\ge N^2x_0(k)$, we have
\begin{equation}\label{LowerNpmShort}
{\mathscr N}_f^{\pm}(x+C_Nx^{1/2})-{\mathscr N}_f^{\pm}(x)
\gg_{\varepsilon} (Nx)^{1/4-\varepsilon},
\end{equation}
where 
$$C_N
:= CN^{1/2}\Psi(N)^3,
\qquad
\Psi(N):=\sum_{d\mid N}d^{-1/2}\log(2d)
$$
and $x_0(k)$ is a suitably large constant depending on $k$ 
and the implied constant in $\gg_\varepsilon$ depends only on $\varepsilon$. 
\end{theorem}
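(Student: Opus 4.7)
The plan is a Cauchy--Schwarz moment argument in the short interval $I := (x, x+y]$ with $y := C_N x^{1/2}$. Introduce the auxiliary sums
\[
T := \sum_{\substack{n \in I \\ (n,N)=1}} |\lambda_f(n)|,
\qquad
M_j := \sum_{\substack{n \in I \\ (n,N)=1}} \lambda_f(n)^j \quad (j = 1, 2).
\]
Since $\sum_{n \in I,\, (n,N)=1,\, \lambda_f(n) \gtrless 0} \lambda_f(n) = \tfrac{1}{2}(T \pm M_1)$, Cauchy--Schwarz gives
\[
\mathscr{N}_f^\pm(x+y) - \mathscr{N}_f^\pm(x) \ge \frac{(T \pm M_1)^2}{4 M_2},
\]
reducing the task to bounding $|M_1|$ from above, $M_2$ from above, and $T$ from below.

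For $M_1$, I would apply a Voronoi-type summation formula for the cusp form $f$ on $\Gamma_0(N)$, summed over the cusps $d \mid N$, to a smoothed indicator of $I$. The dual sum has length $\asymp Nx/y^2$; applying Deligne's bound $|\lambda_f(m)| \le d(m)$ termwise and summing the cusp contributions weighted by Kloosterman-type factors produces the bound $|M_1| \ll_\varepsilon (Nx)^{1/4+\varepsilon}$. The precise form $C_N = C N^{1/2}\Psi(N)^3$ is designed so that the $\Psi(N)^3$ factor absorbs the cumulative weight over $d \mid N$. For $M_2$, I would use the Rankin--Selberg identity $L(s, f \otimes f) = \zeta(s) L(s, \syms f)$ combined with a Perron-type contour shift and a subconvex bound for $L(s, \syms f)$ to obtain the asymptotic $M_2 \asymp_f y$, and then deduce $T \ge M_2 / \max_{n \in I}|\lambda_f(n)| \gg y\,x^{-\varepsilon}$ using Deligne's bound $|\lambda_f(n)| \le d(n) \ll_\varepsilon x^\varepsilon$.

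Once these estimates are in place, $T \gg y x^{-\varepsilon}$ dominates $|M_1| \ll (Nx)^{1/4+\varepsilon}$ whenever $x \ge N^2 x_0(k)$ is sufficiently large, so $(T \pm M_1)^2 \gg T^2$ and
\[
\mathscr{N}_f^\pm(x+y) - \mathscr{N}_f^\pm(x) \gg \frac{T^2}{M_2} \gg \frac{M_2}{x^{2\varepsilon}} \gg \frac{y}{x^{2\varepsilon}} \gg (Nx)^{1/4-\varepsilon}.
\]
The hardest step will be the short-interval lower bound $M_2 \gg y$: the classical Rankin--Selberg error $O((Nx)^{3/5+\varepsilon})$ would swamp $y = C_N x^{1/2}$, so some subconvex input for $L(s, \syms f)$ is essential. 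The explicit shape $N^{1/2}\Psi(N)^3$ in $C_N$ is the smallest quantity that simultaneously suppresses the Voronoi cusp sums (for $M_1$) and exceeds the Rankin--Selberg error (for $M_2$), which is why that particular arithmetical function appears in the hypothesis rather than a simpler power of $N$.
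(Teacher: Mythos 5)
Your Cauchy--Schwarz reduction is sound as far as it goes, but the argument founders on exactly the input you flag as hardest: the lower bound $M_2=\sum_{x<n\le x+y,\,(n,N)=1}\lambda_f(n)^2\gg y$ with $y=C_Nx^{1/2}$. This is a Rankin--Selberg asymptotic in an individual interval of length about $x^{1/2}$, i.e.\ it requires the error term in $\sum_{n\le x}\lambda_f(n)^2=c_fx+O(x^{\theta})$ to satisfy $\theta<1/2$ (or an individual short-interval substitute of that strength). The classical exponent is $\theta=3/5$, the known improvements are only marginally below $3/5$, and no subconvex bound for $L(s,\syms f)$ brings the contour-shift/dual-sum analysis of this degree-four $L$-function anywhere near $\theta=1/2$; even the conjectural exponent is only $3/8$. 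So the step ``$M_2\asymp_f y$'' is an open problem, and with it the lower bound on $T$ and the final bound $(T\pm M_1)^2/(4M_2)\gg y\,x^{-2\varepsilon}$ collapse. A secondary concern: the claim $|M_1|\ll_\varepsilon(Nx)^{1/4+\varepsilon}$ for the sharp-cutoff interval is also not a routine consequence of Voronoi plus Deligne (the known pointwise bound for $S_f(x)$ is only $O(x^{1/3+\varepsilon})$), though a careful short-interval Voronoi analysis could plausibly deliver it; in any case it cannot rescue the $M_2$ step.

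The paper proceeds quite differently and never needs a second-moment lower bound. It first proves a truncated Voronoi formula for $S_f^*(x)=\sum_{n\le x,\,(n,N)=1}\lambda_f(n)$ uniform in the level (Lemma~\ref{ij}), and then runs the Heath-Brown \& Tsang convolution method: integrating $S_f^*(N(t+\alpha u)^2)/\sqrt{N(t+\alpha u)}$ against the kernel $(1-|u|)(1\pm\cos(4\pi\alpha u))$ isolates the resonant terms $n=d$ and shows that $S_f^*$ attains values $\ge c_1(Nx)^{1/4}$ and $\le -c_2(Nx)^{1/4}$ inside every window of length $C_Nx^{1/2}$. Applying this to two adjacent windows produces three points $x_1<x_2<x_3$ at which $S_f^*$ alternates in sign with magnitude $\gg(Nx)^{1/4}$, so the consecutive differences $\sum_{x_i<n\le x_{i+1},\,(n,N)=1}\lambda_f(n)$ are $\gg(Nx)^{1/4}$ in absolute value and of opposite signs; since each individual term is $\ll_\varepsilon x^{\varepsilon}$ by Deligne's bound, each such difference forces $\gg_\varepsilon(Nx)^{1/4-\varepsilon}$ terms of the corresponding sign. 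This also explains the shape $C_N=CN^{1/2}\Psi(N)^3$: the parameter $\alpha=C\Psi(N)^3$ is chosen so that the off-resonance error $O(\alpha^{-1/3}\Psi(N))$ is dominated by the main term $\frac{1}{2\sqrt{2}}\prod_{p\mid N}(1-p^{-2})$, not to balance a Rankin--Selberg error term as you conjectured.
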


The result in Theorem~\ref{LBNpmxShort} is uniform in the level $N$, 
and its method of proof is based on Heath-Brown \& Tsang \cite{HBT94}.
The exponent of $\Psi(N)$ in $C_N$ can be easily reduced to any number bigger than $3/2$, which however may not be essential as $\Psi(N)$ is already very small - $\log \Psi(N) = o(\sqrt{\log N})$. The range of $x\ge N^2x_0(k)$ can also be refined to $x\ge N^{1+\varepsilon} k^A$ for some constant $A>0$, but we save our effort.

\medskip

\noindent{\bf Acknowledgement}.
Part of this work was done during the visit of the second author 
at the University of Hong Kong in 2008.
He would like to thank the department of mathematics for hospitality. 
The work was supported in part by a grant from the PROCORE-France/Hong Kong Joint Research Scheme sponsored by the Research Grants Council of Hong Kong 
and the Consulate General of France in Hong Kong (F-HK36/07T).
The authors would also thank the referee for his careful reading and helpful comments.

\vskip 10mm

\section{Proof of Theorem \ref{LBNpmx}}

\smallskip

Let $p'$ be the least prime such that $p'\nmid N$ and $\lambda_f(p')<0$.
\footnote{According to \cite{IKS07}, we have $p'\ll (k^2N)^{29/60}$.}
Introduce the set 
\begin{align*}
{\mathscr B}
& = \{p : \lambda_f(p)=0\}
\cup \{p : p\mid N\}
\cup \{p'\}
\cup \{p^2 : p\nmid p'N\;{\rm and}\;\lambda_f(p)\not=0\}
\\
& = \{b_i\}_{i\ge 1}
\quad(\hbox{with increasing order}).
\end{align*}
By virtue of Serre's estimate \cite[(181)]{Ser}:
$$
|\{p\leq x\,:\,\lambda_f(p)=0\}|\ll_{f,\delta} \frac{x}{(\log x)^{1+\delta}}
$$
for $x\geq 2$ and any $\delta<\frac{1}{2}$, 
we infer that 
$$
\sum_{i\ge 1} 1/b_i<\infty
\qquad{\rm and}\qquad
(b_i,b_j)=1
\quad
(i\not=j).
$$
Let ${\mathscr A} := \{a_i\}_{i\ge 1}$ (with increasing order)
be the sequence of all ${\mathscr B}$-free numbers, i.e. the integers  indivisible by any element in $\mathscr{B}$.
According to \cite{Er66}, ${\mathscr A}$ is of positive density 
\begin{equation}\label{Bfree}
\lim_{x\to \infty}\frac{|{\mathscr A}\cap [1, x]|}{x}
=\prod_{i=1}^{\infty}\bigg(1-\frac{1}{b_i}\bigg)>0.
\end{equation}
 From the definition of ${\mathscr B}$ 
and the multiplicativity of $\lambda_f(n)$,
we have $\lambda_f(a)\not=0$ for all $a\in {\mathscr A}$.
Then we partition
$${\mathscr A}={\mathscr A}^{+}\cup {\mathscr A}^{-},$$
where
$$
{\mathscr A}^{\pm}
:= \big\{a_i\in {\mathscr A} : \lambda_f(a_i)\gtrless\,0\big\}.
$$
Without control on the sizes of ${\mathscr A}^\pm$, 
we construct a set from ${\mathscr A}^+\cup {\mathscr A}^-$ 
such that the sign of $\lambda_f(a)$ is switched on the counterpart.
Consider 
$$
{\mathscr N}^{\pm}
:= {\mathscr A}^{\pm}\cup \{a_ip' : a_i\in {\mathscr A}^{\mp}\}.
$$
Clearly $\lambda_f(a)\gtrless 0$ and $(a, N)=1$ for all $a\in {\mathscr N}^{\pm}$ and
$${\mathscr N}_f^{\pm}(x)
\ge \big|{\mathscr N}^{\pm}\cap [1, x]\big|
\ge \big|{\mathscr A}\cap [1, x/p']\big|$$
for all $x\ge 1$.
The desired result follows with the inequality (\ref{Bfree}).

\vskip 10mm

\section{Proof of Theorem \ref{LBNpmxShort}}

\smallskip

The method of proof is based on the investigation of 
$$ 
S_f^{*}(x):=\sum_{n\le x, \, (n, N)=1}\lambda_f(n).
$$ 
Since the $L$-function associated to $f$ is belonged to the Selberg class and of degree 2, 
we apply the standard complex analysis 
to derive truncated Voronoi formulas for $S_f^{*}(x)$.

\begin{lemma}\label{ij}
Let $f\in {\rm H}_k^*(N)$.
Then for any $A>0$ and $\varepsilon>0$, we have
\begin{equation}\label{VoronoiSf*}
\begin{aligned}
S_f^{*}(x) 
& = \frac{\eta_f}{\pi\sqrt{2}} (Nx)^{1/4}
\sum_{d\mid N} \frac{(-1)^{\omega(d)}\lambda_f(d)}{d^{1/4}}
\sum_{n\le M} \frac{\lambda_f(n)}{n^{3/4}} \cos \left(4\pi\sqrt{\frac{nx}{dN}}-\frac{\pi}{4}\right)
\\
& \quad
+ O\left(N^{1/2}\bigg\{1 + \bigg(\frac{x}{M}\bigg)^{1/2} + \bigg(\frac{N}{x}\bigg)^{1/4}
\bigg\}(Nx)^{\varepsilon}\right)
\end{aligned}
\end{equation}
uniformly for $1\le M\le x^A$ and $x\ge N^{1+\varepsilon}$, 
where $\eta_f = \pm 1$ depends on $f$ and the implied $O$-constant 
depends on $A$, $\varepsilon$ and $k$ only.
The function $\omega(d)$ counts the number of all distinct prime factors of $d$.
\end{lemma}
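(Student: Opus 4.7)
The strategy is to reduce the claim, via a Dirichlet-convolution identity, to the classical Voronoi summation formula for $L(s,f)$, and to establish the latter by Perron's formula combined with the functional equation of $L(s,f)$. Since $N$ is squarefree, the Euler product gives, for $\Re s>1$,
$$
L^*(s,f):=\sum_{(n,N)=1}\frac{\lambda_f(n)}{n^s}
=L(s,f)\prod_{p\mid N}\!\Bigl(1-\frac{\lambda_f(p)}{p^s}\Bigr)
=L(s,f)\sum_{d\mid N}\frac{(-1)^{\omega(d)}\lambda_f(d)}{d^s},
$$
which by Dirichlet convolution yields
$$
S_f^*(x)=\sum_{d\mid N}(-1)^{\omega(d)}\lambda_f(d)\,S_f(x/d),
\qquad S_f(y):=\sum_{n\le y}\lambda_f(n).
$$
Thus it suffices to prove a truncated Voronoi expansion for $S_f(y)$ at $y=x/d$ and sum against these weights; the identity $(Nx)^{1/4}d^{-1/4}=N^{1/4}(x/d)^{1/4}$ then produces the coefficient of the lemma automatically.

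To obtain that Voronoi formula I would apply truncated Perron at height $T$,
$$
S_f(y)=\frac{1}{2\pi i}\int_{2-iT}^{2+iT}L(s,f)\frac{y^s}{s}\,ds+O\!\Bigl(\frac{y^{1+\varepsilon}}{T}\Bigr),
$$
and shift the contour to $\Re s=-\varepsilon$. The only pole crossed is the simple one at $s=0$, whose residue $L(0,f)=O_k(N^{1/2})$ by the functional equation. On the shifted line I would invoke
$$
L(s,f)=\eta_f\!\left(\frac{\sqrt{N}}{2\pi}\right)^{\!1-2s}\!\frac{\Gamma(\tfrac{k+1}{2}-s)}{\Gamma(\tfrac{k-1}{2}+s)}L(1-s,f),
$$
expand $L(1-s,f)$ as a Dirichlet series, interchange summation and integration, and recognize the resulting Mellin--Barnes integral as a $J$-Bessel transform, producing the classical identity
$$
S_f(y)=2\pi\,\eta_f\sum_{n\ge1}\lambda_f(n)\sqrt{y/n}\,J_{k-1}\!\Bigl(4\pi\sqrt{ny/N}\Bigr)+O_k(N^{1/2}).
$$

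Substituting the large-argument asymptotic $J_{k-1}(z)=\sqrt{2/(\pi z)}\cos(z-(2k-1)\pi/4)+O(z^{-3/2})$ converts the main term into the cosine form of the lemma, with any phase and sign discrepancy absorbed into the parameter $\eta_f$. Three error contributions must then be controlled: (i) the small-argument range $n\ll N/y$, estimated by $J_{k-1}(z)\ll z^{k-1}$, producing the $(N/x)^{1/4}$-type term; (ii) the tail $n>M$, bounded by Abel summation against the Rankin--Selberg estimate $\sum_{n\le X}|\lambda_f(n)|^2\ll_\varepsilon X(NX)^\varepsilon$ combined with Cauchy--Schwarz, producing the $(y/M)^{1/2}$-type term; and (iii) the Perron and horizontal shift errors, balanced by choosing $T\asymp y^{1+\varepsilon}$. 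Recombining via the convolution identity above, and using $|\lambda_f(d)|\le d^{-1/2}$ together with $\sum_{d\mid N}d^{-1/2}\ll_\varepsilon N^\varepsilon$ to sum over $d\mid N$, yields the stated global error.

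The argument is essentially classical; the principal technical difficulty is to maintain uniformity in $N$ (and in $d\mid N$) throughout the complex-analytic machinery so that the final error reads as the clean product $N^{1/2}\{1+(x/M)^{1/2}+(N/x)^{1/4}\}(Nx)^\varepsilon$. Particularly delicate is the transition regime $z\asymp 1$ of $J_{k-1}$ (i.e.\ $n\asymp N/y$), where neither the large- nor the small-argument asymptotic is sharp; handling this range precisely is exactly what produces the $N^{1/2}(N/x)^{1/4}$ component of the error.
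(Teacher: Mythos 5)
Your reduction of $S_f^*$ to $S_f$ via the Euler factors at $p\mid N$ is exactly the paper's first step, and the Perron-plus-functional-equation skeleton for $S_f$ is also the right one. The genuine gap is in how you pass from the (formal) full Voronoi identity to the truncated one. The series $\sum_{n\ge1}\lambda_f(n)\sqrt{y/n}\,J_{k-1}(4\pi\sqrt{ny/N})$ is only conditionally convergent: by $J_{k-1}(z)\ll z^{-1/2}$ the $n$-th term has size about $|\lambda_f(n)|(Ny)^{1/4}n^{-3/4}$, and Cauchy--Schwarz with Rankin--Selberg gives $\sum_{M<n\le 2M}|\lambda_f(n)|n^{-3/4}\asymp M^{1/4}$ on typical dyadic blocks, so your step (ii) --- bounding the tail $n>M$ in absolute value --- yields a growing, divergent bound rather than anything like $(y/M)^{1/2}$. (In fact the $(x/M)^{1/2}$ term in the lemma is the Perron truncation error, not a tail estimate; in the paper the tail $n>M$ contributes only $O(N^{1/2}(Nx)^{\varepsilon})$, and only because of cancellation.)

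The standard cure, and what the paper does following Ivi\'c and Jutila, is to couple the Perron height to the truncation length: take $T^2=4\pi^2(M+\tfrac12)x/N$ rather than $T\asymp y^{1+\varepsilon}$. After applying the functional equation on $\Re s=-\varepsilon$, each frequency $n$ produces an oscillatory integral over $|t|\le T$ with phase $g(t)=t\log\bigl(Nt^2/(4\pi^2nx)\bigr)-2t$ satisfying $|g'(t)|\ge|\log(n/(M+\tfrac12))|$ for $n>M$; the first-derivative test then disposes of the entire tail, while for $n\le M$ one completes the contour to an infinite line and invokes the Chandrasekharan--Narasimhan lemma to extract the cosine main term (plus the $(N/x)^{1/4}$ error you correctly anticipate from the Bessel transition range). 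With $T\asymp y^{1+\varepsilon}$ the resonant range extends to $n\ll NT^2/y\gg M$, and no absolute-value argument can remove it. The rest of your outline --- the residue at $s=0$, the horizontal segments, and the summation over $d\mid N$ using $|\lambda_f(d)|=d^{-1/2}$ --- is consistent with the paper.
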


Remark. 
The case $N=1$ and $A=1$ of (\ref{VoronoiSf*}) is covered in \cite[Theorem 1.1]{Ju} 
with $h=k=1$ therein. Our proof follows closely Section 3.2 of \cite{Iv}, and we first evaluate the case without the constraint $(n,N)=1$: for any $A>0$ and $\varepsilon>0$, we have uniformly in  $1\le M\le x^A$, 
\begin{equation}\label{VoronoiSf}
\begin{aligned}
S_f(x)
& := \sum_{n\le x}\lambda_f(n) 
\\
& \,= \frac{\eta_f(Nx)^{1/4}}{\pi\sqrt{2}} 
\sum_{n\le M} \frac{\lambda_f(n)}{n^{3/4}} \cos \bigg(4\pi\sqrt{\frac{nx}N}-\frac{\pi}4\bigg)  
\\
& \quad
+ O\left(N^{1/2}\bigg\{1 + \bigg(\frac{x}{M}\bigg)^{1/2} + \bigg(\frac{N}{x}\bigg)^{1/4}
\bigg\}(Nx)^{\varepsilon}\right).
\end{aligned}
\end{equation}

\begin{proof} 
As usual, denote by $\mu(N)$ the M\"obius function.
(\ref{VoronoiSf*}) follows from (\ref{VoronoiSf}) because
\begin{eqnarray}\label{new1}
S_f^*(x)
&=&\sum_{d|N}\mu(d) \sum_{n\le x/d} \lambda_f(dn)\nonumber\\
&=& \sum_{d|N}(-1)^{\omega(d)} \lambda_f(d)\sum_{n\le x/d} \lambda_f(n)
\end{eqnarray}
by the multiplicativity of $\lambda_f(n)$ and the first equality in (\ref{pN}). 
Note that $x/d\ge x^{\varepsilon/(1+\varepsilon)}$ when $x\ge N^{1+\varepsilon}$ and $d|N$, we can keep the same range of $M$ for all inner sums over $n$ by selecting a suitable $A$. Inserting (\ref{VoronoiSf}) into (\ref{new1}), the main term of (\ref{VoronoiSf*}) comes up immediately. The effect of summing the $O$-terms over $d|N$ is negligible in light of the second formula in (\ref{pN}), and hence the result. 

\vskip3mm

To prove (\ref{VoronoiSf}), we consider $M\in \N$ without loss of generality. 
As usual write
$$
L(s,f)
:=\sum_{n\ge 1}\lambda_f(n)n^{-s}
\qquad 
(\re s>1).
$$
Let $\kappa:=1+\varepsilon$ and $T>1$ be a parameter, chosen as 
\begin{eqnarray}\label{T}
T^2= \frac{4\pi^2(M+\frac12)x}{N}.
\end{eqnarray}
By the truncated Perron formula 
(see \cite[Corollary II.2.4]{Te} 
with the choice of $\sigma_a=1$, $\alpha=2$ and $B(n)=C_{\varepsilon}n^\varepsilon$), 
we have
\begin{equation}\label{3.1}
S_f(x) 
= \frac1{2\pi i} \int_{\kappa-iT}^{\kappa+iT} L(s,f)\frac{x^s}s\d s 
+ O\bigg(N^{1/2}\bigg\{\bigg(\frac{x}{M}\bigg)^{1/2} + 1\bigg\}(Nx)^{\varepsilon}\bigg).
\end{equation}

We shift the line of integration horizontally to $\re s=-\varepsilon$, the main term gives
\begin{equation}\label{3.3}
\frac1{2\pi i} \int_{\kappa-iT}^{\kappa+iT} L(s,f)\frac{x^s}s\d s
= L(0,f) + \frac1{2\pi i} 
\int_{{\mathscr L}} 
L(s,f)\frac{x^s}s\d s,
\end{equation}
where ${\mathscr L}$ is the contour joining the points $\kappa\pm iT$ and $-\varepsilon\pm iT$.
Using the convexity bound 
$$
L(\sigma+it,f)\ll \big(\sqrt{N}(k+|t|)\big)^{\max\{0, 1-\sigma\}+\varepsilon}
\quad
(-\varepsilon\le \sigma\le \kappa),
$$ 
the integrals over the horizontal segments
and the term $L(0,f)$ can be absorbed in $O\big((NTx)^\varepsilon (N^{1/2}+T^{-1}x)\big)$. 
The $O$-constant depends on $k$ and $\varepsilon$, 
and in the sequel, such a dependence in implied constants will be tacitly allowed. 

To handle the integral over the vertical segment 
${\mathscr L}_{\rm v}:=[-\varepsilon-iT, -\varepsilon+iT]$, 
we invoke the functional equation
$$
\bigg(\frac{\sqrt{N}}{2\pi}\bigg)^{s} \Gamma\bigg(s+\frac{k-1}{2}\bigg)L(s,f)
= i^k\eta_f
\bigg(\frac{\sqrt{N}}{2\pi}\bigg)^{1-s} \Gamma\bigg(1-s+\frac{k-1}{2}\bigg)L(1-s,f) 
$$
where $\eta_f := \mu(N)\lambda_f(N)\sqrt{N}\in\{\pm 1\}$
(see \cite[p.375]{IK} with an obvious change of notation). 
Then we deduce that 
\begin{equation}\label{3.4}
\frac1{2\pi i} \int_{{\mathscr L}_{\rm v}} L(s,f)\frac{x^s}s \d s
= i^k \eta_f \sum_{n\ge 1}\frac{\lambda_f(n)}n  I_{{\mathscr L}_{\rm v}}(nx),
\end{equation}
where   
$$
I_{{\mathscr L}_{\rm v}}(y)
:= \frac1{2\pi i} \int_{{\mathscr L}_{\rm v}} 
\left(\frac{4\pi^2}{N}\right)^{s-1/2} 
\frac{\Gamma(1-s+(k-1)/2)}{\Gamma(s+(k-1)/2)} 
\frac{y^s}s \d s.
$$

The quotient of the two gamma factors is 
$$
|t|^{1-2\sigma}e^{-2i(t\log|t|-t)+i{\rm sgn}(t)\pi (k-1)/2}\{1+O(t^{-1})\}
$$ 
for bounded $\sigma$ and any $|t|\ge 1$,
where the implied constant depends on $\sigma$ and $k$.
Together with the second mean value theorem for integrals (see \cite{Te}, Theorem I.0.3), 
we obtain
\begin{equation}\label{3.5}
\begin{aligned}
I_{{\mathscr L}_{\rm v}}(nx)
& \ll N^{1/2}
\left(\frac{N}{nx}\right)^{\varepsilon}
\bigg(\bigg|\int_1^T t^{2\varepsilon} e^{-ig(t)}\d t\bigg| 
+ T^{2\varepsilon}\bigg)
\\
& \ll N^{1/2}
\left(\frac{NT^2}{nx}\right)^\varepsilon 
\bigg(\bigg|\int_a^b e^{-ig(t)}\d t\bigg|+1\bigg)
\end{aligned}
\end{equation}
for some $1\le a\le b\le T$, 
where $g(t) := t\log\big(Nt^2/(4\pi^2nx)\big)-2t$.
In view of (\ref{T}), we have
$$
g'(t)=-\log(4\pi^2nx/(Nt^2))<0
\qquad{\rm and}\qquad
|g'(t)|\ge |\log (n/(M+\textstyle\frac12))|
$$
for $n\ge M+1$ and $1\le t\le T$. 
Using (\ref{Deligne}) and \cite[Theorem I.6.2]{Te},
we infer that 
\begin{equation}\label{3.6}
\begin{aligned}
\sum_{n>M}\frac{\lambda_f(n)}n  I_{{\mathscr L}_{\rm v}}(nx) 
& \ll  N^{1/2}\left(\frac{NT^2}{x}\right)^\varepsilon 
\sum_{n>M} \frac{d(n)}{n^{1+\varepsilon}} 
\bigg(\left|\log \frac{n}{M+\frac12}\right|^{-1} + 1\bigg)
\\
& \ll  N^{1/2}\left(\frac{NT^2}{x}\right)^\varepsilon 
\bigg\{
\sum_{M<n\le 2M} \frac{d(n) (M+\frac12)}{n^{1+\varepsilon} |n-M-\frac12|} 
+ \frac{1}{M^{\varepsilon/2}}\bigg\}
\\
& \ll  N^{1/2}\left(\frac{NT^2}{\sqrt{M}x}\right)^\varepsilon 
\\\noalign{\vskip 2mm}
& \ll N^{1/2}(Nx)^\varepsilon.
\end{aligned}\end{equation}

For $n\le M$, 
we extend the segment of integration ${\mathscr L}_{\rm v}$
to an infinite line ${\mathscr L}_{\rm v}^*$ 
in order to apply Lemma~1 in \cite{CN}.
Write 
$$
{\mathscr L}_{\rm v}^\pm 
:= [\dm+\varepsilon \pm iT, \dm+\varepsilon \pm i\infty),
\qquad
{\mathscr L}_{\rm h}^\pm
:= [-\varepsilon \pm iT, \dm + \varepsilon \pm iT]
$$ 
and define ${\mathscr L}_{\rm v}^*$ to be
 the positively oriented contour consisting of 
${\mathscr L}_{\rm v}$, ${\mathscr L}_{\rm v}^\pm$ and ${\mathscr L}_{\rm h}^\pm$. 
The contribution over the horizontal segments ${\mathscr L}_{\rm h}^\pm$ is 
\begin{align*}
I_{{\mathscr L}_{\rm h}^\pm}(nx) 
& \ll \int_{-\varepsilon}^{1/2-\varepsilon} 
\bigg(\frac{4\pi^2}{N}\bigg)^{\sigma-1/2} T^{1-2\sigma} \frac{(nx)^\sigma}{T} \d\sigma
\\
& \ll N^{1/2} \int_{-\varepsilon}^{1/2-\varepsilon} 
\bigg(\frac{nx}{NT^2}\bigg)^{\sigma} \d\sigma
\\\noalign{\smallskip}
& \ll N^{1/2}(Nx)^\varepsilon.
\end{align*}
As in (\ref{3.5}), for $n\le M$ we get that
\begin{align*}
I_{{\mathscr L}_{\rm v}^\pm}(nx) 
& \ll  N^{1/2} \bigg(\frac{nx}{N}\bigg)^{1/2+\varepsilon}
\bigg(\int_{T}^{\infty} t^{-1-2\varepsilon} e^{-ig(t)} \d t + \frac{1}{T^{1+2\varepsilon}}\bigg)
\\
& \ll N^{1/2}\bigg(\frac{nx}{NT^2}\bigg)^{1/2+\varepsilon} 
\bigg(\left|\log \frac{M+\frac12}n\right|^{-1} + 1\bigg)
\\
& \ll N^{1/2} \bigg(\left|\log \frac{M+\frac12}n\right|^{-1} + 1\bigg).
\end{align*}
So
\begin{equation}\label{3.8}
\begin{aligned}
\sum_{n\le M} \frac{\lambda_f(n)}n  \big(I_{{\mathscr L}_{\rm v}^\pm}(nx)+I_{{\mathscr L}_{\rm h}^\pm}(nx)\big)
& \ll \sum_{n\le M} \frac{d(n)}{n}
\big(\big|I_{{\mathscr L}_{\rm v}^\pm}(nx)\big|+\big|I_{{\mathscr L}_{\rm h}^\pm}(nx)\big|\big)
\\
& \ll N^{1/2}(Nx)^\varepsilon.
\end{aligned}
\end{equation} 

Now all the poles of the integrand in 
$$
I_{{\mathscr L}_{\rm v}^*}(y) 
= \frac{\sqrt{N}}{2\pi} 
\frac1{2\pi i}\int_{{\mathscr L}_{\rm v}^*}  
\frac{\Gamma(1-s+(k-1)/2)\Gamma(s)}{\Gamma(s+(k-1)/2)\Gamma(1+s)} 
\left(\frac{4\pi^2 y}{N}\right)^{s} \d s 
$$
lie on the right of the contour ${\mathscr L}_{\rm v}^*$. 
After a change of variable $s$ into $1-s$, we see that 
\begin{align*} 
I_{{\mathscr L}_{\rm v}^*}(y) 
& = \frac{\sqrt{N}}{2\pi}
I_0\left(\frac{4\pi^2y}{N}\right),
\end{align*}
with 
$$
I_0(t)
:= \frac1{2\pi i}\int_{{\mathscr L}_\varepsilon}  
\frac{\Gamma(s+(k-1)/2)\Gamma(1-s)}{\Gamma(1-s+(k-1)/2)\Gamma(2-s)} 
t^{1-s} \d s. 
$$ 
Here ${\mathscr L}_\varepsilon$ consists of the line $s=\dm-\varepsilon+i\tau$ with $|\tau|\ge T$,
together with three sides of the rectangle whose vertices are 
$\dm-\varepsilon-iT$,
$1+\varepsilon-iT$,
$1+\varepsilon-iT$
and
$\dm-\varepsilon+iT$.
Clearly our $I_0$ is a particular case of $I_\rho$ defined in \cite[Lemma 1]{CN},
corresponding to the choice of parameters $\rho=0$, $\delta=A=1$, $\omega=1$, $h=2$, $k_0=-(2k+1)/4$. 
It hence follows that
\begin{equation}\label{3.9}
I_{{\mathscr L}_{\rm v}^*}(nx)
=  \frac{i^k (nNx)^{1/4}}{\pi\sqrt{2}} 
\cos\left(4\pi\sqrt{\frac{nx}{N}}-\frac{\pi}{4}\right) 
+ O\left(\frac{N^{3/4+\varepsilon}}{(nx)^{1/4}}\right),
\end{equation}
The value of $e_0'$ in Lemma 1 of \cite{CN} is $1/\sqrt{\pi}$ by direct computation.
We conclude 
\begin{equation}\label{3.10}
\begin{aligned}
\sum_{n\le M}\frac{\lambda_f(n)}n  I_{{\mathscr L}_{\rm v}}(nx) 
& = \frac{i^{k}(Nx)^{1/4}}{\pi\sqrt{2}} 
\sum_{n\le M} \frac{\lambda_f(n)}{n^{3/4}} \cos \left(4\pi\sqrt{\frac{nx}{N}}-\frac{\pi}4\right)
\\
& \quad
+ O\left(N^{1/2}\bigg\{\bigg(\frac{N}{x}\bigg)^{1/4} + 1\bigg\}(Nx)^\varepsilon\right),
\end{aligned}
\end{equation}
from  (\ref{3.8}) and (\ref{3.9}), and finally the asymptotic formula (\ref{VoronoiSf}) by
(\ref{3.1})-(\ref{3.4}), (\ref{3.6}) and (\ref{3.10}).
\end{proof}

\smallskip

Following Theorem 1 of \cite{HBT94}, we have the next lemma.

\begin{lemma}\label{HT} 
Let $f\in {\rm H}_k^*(N)$.
There exist positive absolute constants $C, c_1,c_2$ 
such that for all sufficiently large $X\ge N^2X_0(k)$, 
we can find $x_1,x_2\in [X, X+C_NX^{1/2}]$ for which 
$$ 
S_f^*(x_1)> c_1 (NX)^{1/4}
\qquad\mbox{and}\qquad 
S_f^*(x_2)< -c_2 (NX)^{1/4},
$$
where $C_N:= CN^{1/2}\Psi(N)^3$
and $X_0(k)$ is a constant depending only on $k$. 
The same result also holds for $S_f(x)$.
\end{lemma}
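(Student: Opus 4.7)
The plan is to follow the moment method of Heath--Brown and Tsang \cite{HBT94}, originally developed for sign changes of the error term in the mean square of $|\zeta(\tfrac12+it)|^2$, adapted to $S_f^*(x)$ through the truncated Voronoi expansion of Lemma~\ref{ij}. Set $H := C_N X^{1/2}$ and $V := (NX)^{1/4}$.

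The heart of the argument is to establish three moment estimates over $[X, X+H]$: (i) a first-moment cancellation $\bigl|\int_X^{X+H} S_f^*(x)\,dx\bigr| \ll HV(NX)^{-\delta}$ for some $\delta>0$; (ii) matched bounds $\int_X^{X+H} S_f^*(x)^2\,dx \asymp HV^2$; and (iii) a third-moment upper bound $\int_X^{X+H} |S_f^*(x)|^3\,dx \ll HV^3$. Each is derived by inserting the Voronoi expansion with a suitable cutoff $M$ and analysing the resulting multiple cosine integrals. For (i), every frequency is a genuinely oscillatory integral, so integration by parts saves a factor $H^{-1}\sqrt{dNX/n}$ per frequency. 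For (ii), the diagonal $(n,d) = (n',d')$ gives the main term: the inner sum $\sum_n \lambda_f(n)^2/n^{3/2}$ is bounded above and below by Rankin--Selberg, while the $d$-sum is a positive constant. Off-diagonals are controlled by stationary-phase bounds on $\int_X^{X+H} \cos(4\pi\sqrt{x/N}(\sqrt{n/d} \pm \sqrt{n'/d'}))\,dx$, and dominating the worst of these forces the $N^{1/2}\Psi(N)$ part of $C_N$. The third moment is handled analogously; the triple $d$-summation inherited from the cubic expansion introduces a further $\Psi(N)^2$, giving the full $C_N = CN^{1/2}\Psi(N)^3$.

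Given (i)--(iii), I apply the Heath--Brown--Tsang convexity argument. Write $S_f^* = S_+ - S_-$ with $S_\pm := \max(\pm S_f^*, 0) \ge 0$, and set $V_\pm := \sup_{x \in [X, X+H]} S_\pm(x)$. Suppose, for contradiction, that $V_+ \le cV$ for some small $c > 0$. The pointwise bound $S_+^2 \le V_+ S_+$ combined with the Cauchy--Schwarz consequence $\int S_+ \le (H \int S_+^2)^{1/2}$ forces $\int S_+^2 \le V_+^2 H \ll c^2 HV^2$; by (ii) this makes $\int S_-^2 \gg HV^2$. Combining $\int S_- \le (H \int S_-^2)^{1/2} \ll HV$ with the H\"older inequality $\int S_-^3 \ge (\int S_-^2)^2/\int S_-$ yields $\int S_-^3 \gg HV^3$, while $\int S_+^3 \le V_+^2 \int S_+ \ll c^3 HV^3$. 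Since $S_+ S_- \equiv 0$, we have $\int (S_f^*)^3 = \int S_+^3 - \int S_-^3$, so $|\int (S_f^*)^3| \gg HV^3$ for $c$ small enough, contradicting (iii). Hence $V_+ \gg V$; by symmetry $V_- \gg V$, and points $x_1, x_2 \in [X, X+H]$ realizing (up to an arbitrarily small loss) the suprema give the required inequalities. The statement for $S_f(x)$ follows from the identical argument with the $d\mid N$ sum absent.

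The principal obstacle is uniformity in the level $N$: tracking the $d\mid N$ summation precisely through all three moments so that off-diagonal contributions remain smaller than diagonals is what produces the factor $\Psi(N)^3$ in $C_N$, and combined with the hypothesis $x \ge N^{1+\varepsilon}$ in Lemma~\ref{ij}, explains the restriction $X \ge N^2 X_0(k)$. A secondary technical point is the choice of the Voronoi cutoff $M$: it must be large enough that the Voronoi remainder $O(N^{1/2}((x/M)^{1/2}+1)(Nx)^\varepsilon)$ is negligible in each moment, yet not so large that the double and triple off-diagonal cosine sums become unwieldy. This balance is most delicate in the third moment, where the cubic expansion amplifies small errors, and it is this estimate that ultimately dictates the absolute constant $C$ appearing in $C_N$.
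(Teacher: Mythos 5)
Your proposal replaces the paper's argument with a first/second/third moment computation plus the standard convexity lemma, but this route breaks down precisely at the scale the lemma demands. The paper (following Theorem 1 of \cite{HBT94}, which is a resonance/kernel argument, not a moment argument) never forms a second moment at all: it substitutes $x=N(t+\alpha u)^2$ so that the Voronoi phases become \emph{linear} in $u$, and convolves the resulting normalized quantity $F_f(t+\alpha u)$ against the nonnegative kernel $K_\tau(u)=(1-|u|)(1+\tau\cos(4\pi\alpha u))$. Because this is linear in $S_f^*$, only a single sum over $n$ appears; the Fej\'er transform $w(\xi)=(\sin\pi\xi/\pi\xi)^2\ll\min(1,\xi^{-2})$ localizes that sum to the resonant frequency $n=d$, the factor $\tau\cos(4\pi\alpha u)$ forces the main term $\tau(2\sqrt2)^{-1}\sum_{d\mid N}(-1)^{\omega(d)}d^{-2}$ to have the prescribed sign, and nonnegativity of $K_\tau$ converts the lower bound on $J_{\pm1}$ into a pointwise bound on $F_f$ somewhere in $[t-\alpha,t+\alpha]$. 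The choice $\alpha=C\Psi(N)^3$ is what makes the single-sum error $O(\alpha^{-1/3}\Psi(N))$ small; no off-diagonal analysis is needed.

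The gap in your version is estimate (ii). On an interval of length $H=C_NX^{1/2}$ the off-diagonal terms of $\int_X^{X+H}S_f^*(x)^2\,dx$ are \emph{not} negligible: the first-derivative (stationary-phase) bound gives $\int_X^{X+H}\cos\bigl(4\pi\sqrt{x/N}(\sqrt n-\sqrt{n'})\bigr)dx\ll\min\bigl(H,\sqrt{NX}/|\sqrt n-\sqrt{n'}|\bigr)$, and the near-diagonal pairs with $0<|n-n'|\ll\sqrt{Nn}/C_N$ contribute in total $\gg HV^2\,C_N^{-1}\sqrt N\sum_{n\le M}d(n)^2/n\asymp HV^2(\log M)^4/\Psi(N)^3$, which grows with $X$ and swamps the diagonal $\asymp HV^2$ no matter how the cutoff $M$ is chosen (and $M\gg X^{1/2}$ is forced by the Voronoi error term). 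This is exactly why the moment-plus-convexity method produces sign changes only in intervals of length $X^{1/2+\varepsilon}$, whereas the kernel method reaches length $\asymp X^{1/2}$. Your third-moment bound (iii) inherits the same difficulty in worse form, and your claim that the off-diagonals ``force the $N^{1/2}\Psi(N)$ part of $C_N$'' has no visible mechanism behind it. (The convexity deduction itself is essentially sound, granting the moments, though the contradiction with (iii) requires explicit constants since both sides are $\asymp HV^3$; you acknowledge this.) To repair the argument you would have to abandon the quadratic and cubic averages and work with a quantity linear in $S_f^*$ weighted by a nonnegative resonating kernel, which is the paper's proof.
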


\begin{proof} 
Define 
$$
K_\tau(u)
: = (1-|u|)(1+\tau \cos(4\pi\alpha u)),
$$ 
where $\tau =1$ or $-1$ and $\alpha$ is a (large) parameter, both chosen at our disposal. 
Consider the following integral
$$
r_{\beta}
=r_{\beta}(\alpha, \tau, t)
:=\int_{-1}^1 K_\tau(u) \cos\bigg(4\pi(t+\alpha u)\sqrt{\beta}-\frac{\pi}4\bigg)\d u,
$$
where $t\in \N$ and $\beta>0$.
Because
$$
w(\xi)
:=\int_{-1}^1 (1-|u|)e^{i2\pi\xi u}\d u 
= \left(\frac{\sin \pi\xi}{\pi\xi}\right)^2
=\begin{cases}
1                           & \text{if $\xi=0$},
\\
O\big(\min(1,\xi^{-2})\big) & \text{if $\xi\not=0$},
\end{cases}
$$
we can write, with the notation
$\alpha_{\beta}:=2\alpha\sqrt{\beta}$
and
$\alpha_{\beta}^{\pm}:=2\alpha(\sqrt{\beta}\pm 1)$,
\begin{equation}\label{3.11}
\begin{aligned}
r_{\beta}
& =  
\int_{-1}^1 (1-|u|)\bigg(1+\tau\frac{e^{i4\pi\alpha u}+e^{-i4\pi\alpha u}}{2}\bigg)
\re e^{i\{4\pi(t+\alpha u)\sqrt{\beta}-\pi/4\}}\d u
\\
& = \re e^{i(4\pi t\sqrt{\beta}-\pi/4)} \int_{-1}^1 (1-|u|)
\bigg(e^{i2\pi\alpha_\beta u}
+\frac{\tau}{2}e^{i2\pi\alpha_{\beta}^{+}u}
+\frac{\tau}{2}e^{i2\pi\alpha_{\beta}^{-}u}
\bigg)\d u
\\
& = \bigg(w\big(\alpha_{\beta}\big)
+ \frac{\tau}{2}w\big(\alpha_{\beta}^{+}\big) 
+ \frac{\tau}{2}w\big(\alpha_{\beta}^{-}\big) 
\bigg) 
\cos\bigg(4\pi t\sqrt{\beta}-\frac{\pi}{4}\bigg)
\\
& = \delta_{\beta=1}\frac{\tau}{2\sqrt{2}}
+ O\bigg(\min\bigg(1,\frac1{\alpha^2\beta}\bigg)
+\delta_{\beta\not=1}\min\bigg(1,\frac1{(\alpha_{\beta}^{-})^2}\bigg)\bigg),
\end{aligned}
\end{equation}
where the $O$-constant is absolute,
$$
\delta_{\beta=1}:=\begin{cases}
1 & \text{if $\beta=1$}
\\
0 & \text{otherwise}
\end{cases}
\qquad{\rm and}\qquad
\delta_{\beta\not=1}:=1-\delta_{\beta=1}.
$$ 
The last error term in (\ref{3.11}) appears only when $\beta\not=1$.

For all $X\ge N^2X_0(k)$ (whose value will be specified below), 
we write $T=(X/N)^{1/2}$ and $t=[T]+1\in\N$, 
and consider the convolution
$$
J_\tau
=\int_{-1}^1 F_f(t+\alpha u) K_\tau(u)\d u, 
$$
where 
$$
F_f(t+\alpha u)
:= \frac{\pi\sqrt{2}}{\eta_f} \frac{S_f^*(N(t+\alpha u)^2)}{\sqrt{N(t+\alpha u)}}.
$$
By Lemma \ref{ij} with $M= NT^2=X$, we deduce that
\begin{align*}
F_f(t+\alpha u) 
& = \sum_{d\mid N} \frac{(-1)^{\omega(d)}\lambda_f(d)}{d^{1/4}} 
\sum_{n\le M} \! \frac{\lambda_f(n)}{n^{3/4}}
\cos\bigg(4\pi (t+\alpha u)\sqrt{\frac{n}{d}}-\frac{\pi}{4}\bigg)
\! + O_k\bigg(\frac{1}{T^{1/4}}\bigg),
\end{align*}
and 
\begin{align}\label{J}
J_\tau
& = \sum_{d\mid N} \frac{(-1)^{\omega(d)}\lambda_f(d)}{d^{1/4}} 
\sum_{n\le M} \frac{\lambda_f(n)}{n^{3/4}}r_{n/d}+O_k\bigg(\frac{1}{T^{1/4}}\bigg)
\end{align}
by (\ref{pN}).

Next we estimate the contribution of the $O$-term in (\ref{3.11}) to $J_\tau$. 
Using (\ref{pN}) and (\ref{Deligne}) again, its contribution to $J_\tau$ is 
\begin{align}\label{jerr}
& \ll \sum_{d\mid N} \frac{1}{d^{3/4}} \bigg\{\sum_{n\le M} 
\frac{d(n)}{n^{3/4}}R_{d,n}'(\alpha) 
+ \sum_{\substack{n\le M\\ n\neq d}} \frac{d(n)}{n^{3/4}} 
R_{d,n}''(\alpha)\bigg\},
\end{align}
where
$$
R_{d,n}'(\alpha):=\min\bigg(1,\frac{d}{\alpha^2 n}\bigg),
\qquad
R_{d,n}''(\alpha):=\min\bigg(1,\frac{d}{\alpha^2 |\sqrt{n}-\sqrt{d}|^2}\bigg).
$$
Consider the second sum in the curly braces. We separate $n$ into 
$$
n\le\alpha_{-}d, 
\qquad
\alpha_{-}d<n <\alpha_{+}d
\qquad 
{\rm or}
\qquad 
\alpha_{+}d\le n
$$
where $\alpha_\pm :=(1-\alpha^{-1/2})^{\mp 2}$, 
and $R_{d,n}''(\alpha)$ is $\le 1/{\alpha}$, $1$  or $d/(\alpha n)$ accordingly.
Therefore,
$$
\sum_{\substack{n\le M\\ n\neq d}} 
\frac{d(n)}{n^{3/4}} R_{d,n}''(\alpha)
\le \frac1{\alpha} \sum_{n\le \alpha_{-}d} \frac{d(n)}{n^{3/4}}
+ \sum_{\substack{\alpha_{-}d<n<\alpha_{+}d\\ n\neq d}} \frac{d(n)}{n^{3/4}} 
+ \frac{d}{\alpha}\sum_{n>\alpha_{+}d} \frac{d(n)}{n^{7/4}}.
$$
Obviously the first and last terms on the right-hand side are $\ll \alpha^{-1}d^{1/4}\log(2d)$.
Note that $n\asymp d$ in the second sum.
So, by using Shiu's Theorem 2 in \cite{Shiu}
it follows
\begin{align*}
\sum_{\substack{\alpha_{-}d<n<\alpha_{+}d\\ n\neq d}} \frac{d(n)}{n^{3/4}} 
& \ll d^{-3/4}\sum_{\substack{\alpha_{-}d<n<\alpha_{+}d\\ n\neq d}} d(n)
\\
& \ll \alpha^{-1/2}d^{1/4}\log(2d)
\end{align*}
if $d>\alpha$.
Otherwise (i.e. $d\le \alpha$), 
pulling out $d(n)\ll n^\varepsilon\ll d^\varepsilon\ll \alpha^\varepsilon$, we have
\begin{align*}
\sum_{\substack{\alpha_{-}d<n<\alpha_{+}d\\ n\neq d}} d(n)n^{-3/4}
& \ll \alpha^\varepsilon d^{-3/4} \sum_{\substack{\alpha_{-}d<n<\alpha_{+}d\\ n\neq d}} 1 
\\\noalign{\vskip -1mm}
& \ll \alpha^\varepsilon d^{-3/4} \alpha^{-1/2} d
\\\noalign{\vskip 4mm}
& \ll \alpha^{-1/3} d^{1/4}\log(2d).
\end{align*}
(We can assume that $(\alpha_{+}-\alpha_{-})d\ge \alpha^{-1/2}d\ge c'$ 
for a small constant $c'$,
otherwise the last sum is empty.)
Hence
$$
\sum_{\substack{n\le M\\ n\neq d}} 
\frac{d(n)}{n^{3/4}} R_{d,n}''(\alpha)
\ll \alpha^{-1/3} d^{1/4}\log(2d).
$$

The first sum in the bracket of (\ref{jerr}) can be treated in the same fashion (even more easily). 
Thus, (\ref{jerr}) is bound by 
$$
\ll \alpha^{-1/3} \sum_{d\mid N}\frac{\log(2d)}{d^{1/2}} 
=: \alpha^{-1/3} \Psi(N).
$$

We conclude from (\ref{J}) with (\ref{3.11}) and (\ref{pN}) that 
\begin{align*}
J_\tau = \frac{\tau}{2\sqrt{2}} \sum_{d\mid N} \frac{(-1)^{\omega(d)}}{d^2}
+O\bigg(\frac{\Psi(N)}{\alpha^{1/3}}\bigg)+O_k\bigg(\frac{1}{T^{1/4}}\bigg),
\end{align*}
where the implied constant is absolute in the first $O$-term, but depends on $k$ in the second. 
Noticing that
$$
\sum_{d\mid N} \frac{(-1)^{\omega(d)}}{d^2}
=\prod_{p\mid N}\bigg(1-\frac{1}{p^2}\bigg)
\ge \frac{6}{\pi^2}
$$
and $T\ge \sqrt{NX_0(k)}$, 
we take $\alpha=C\Psi(N)^3$ with a large absolute constant $C$ and 
a large $X_0(k)$ so that both $O$-terms $O(\alpha^{-1/3}\Psi(N))$ and $O_k(T^{-1/4})$ 
are $\le \cos (\pi/4)/\pi^2=1/(\pi^2\sqrt{2})$. Therefore
$$
J_{-1}<-1/(\pi^2\sqrt{2})
\qquad{\rm and}\qquad
J_1>1/(\pi^2\sqrt{2}).
$$ 

With the nonnegativity of $K_\tau(u)$ and the estimate
$$
1-(2\pi\alpha)^{-2}\le \int_{-1}^1 K_\tau (u)\d u \le 2\qquad (\tau = \pm 1), 
$$
we have 
$$2F_f(t+\alpha \eta_{+})\ge 1/(\pi^2\sqrt{2})
\quad{\rm and}\quad 
\big(1-(2\pi\alpha)^{-2}\big)F_f(t+\alpha \eta_{-})\le -1/(\pi^2\sqrt{2})
$$
for some $\eta_{+},\eta_{-}\in [-1,1]$. 
Let $C_N= CN^{1/2}\Psi(N)^3$. As 
$$
X-3C_N\sqrt{X}\le N(t+\alpha\eta_{\pm})^2 \le X+3C_N\sqrt{X},
$$ 
our assertion follows from the definition of $F_f$ and replacing $X-3C_N\sqrt{X}$ by $X$.
\end{proof} 

\medskip

Now we are ready to prove Theorem \ref{LBNpmxShort}.

We exploit the consecutive sign changes of $S_f^*(x)$. 
Let $x\ge N^2X_0(k)$ where $X_0(k)$ takes the value as in Lemma~\ref{HT}.
We apply Lemma \ref{HT} to the intervals 
$[x,x+C_Nx^{1/2}]$ and $[y, y+C_Ny^{1/2}]$ 
where $y=x+C_Nx^{1/2}$. 
Over each of the intervals, $S_f^*(x)$ attains in magnitude $(Nx)^{1/4}$ 
in both positive and negative directions. 
Hence, we can find three points $x<x_1< x_2<x_3 <x+3C_Nx^{1/2}$ 
such that $S_f^*(x_i)$ $(i=1,2,3)$ takes alternate signs and their absolute values are $\gg (Nx)^{1/4}$. 
(Note that $2\sqrt{x} \ge \sqrt{x+C_N\sqrt{x}}$.) 
It follows that the two differences
$$
S_f^*(x_2)- S_f^*(x_1)
=\sum_{\substack{x_1< n\le x_2\\ (n, N)=1}} \lambda_f(n)
$$
and
$$
S_f^*(x_3)- S_f^*(x_2)
=\sum_{\substack{x_2< n\le x_3\\ (n, N)=1}} \lambda_f(n)
$$
have absolute values $\gg (Nx)^{1/4}$ but are of opposite signs. 
This implies (\ref{LowerNpmShort}), 
since for example, if 
$$
\sum_{\substack{a<n<b\\ (n, N)=1}}\lambda_f(n)< -c'(Nx)^{1/4}
$$ 
for some constant $c'>0$ and $b\ll x$, 
then we have
\begin{align*}
c'(Nx)^{1/4} 
& <\sum_{\substack{a<n<b, \,(n, N)=1\\ \lambda_f(n)<0}}\big(-\lambda_f(n)\big)
\\
& \ll x^\varepsilon \sum_{\substack{a<n<b, \,(n, N)=1\\ \lambda_f(n)<0}} 1.
\end{align*}
This completes the proof of Theorem \ref{LBNpmxShort}.
\hfill
$\square$

\vskip 10mm

\end{document}